\DeclareMathAlphabet{\mathpzc}{OT1}{pzc}{m}{it}
\def\cB{\mathscr{B}}
\def\cF{\mathscr{F}}
\def\cG{\mathscr{G}}
\def\BQ{\mathbb{Q}}
\def\fm{\mathfrak{m}}
\def\sD{\mathsf{D}}
\def\adots{\mathinner{\mkern1mu\raise1.0pt\vbox{\kern7.0pt\hbox{.}}\mkern2mu\raise4.0pt\hbox{.}\mkern2mu\raise7.0pt\hbox{.}\mkern1mu}}
\def\Coker{\operatorname{Coker}}
\def\depth{\operatorname{depth}}
\def\Ext{\operatorname{Ext}}
\def\gldim{\operatorname{gldim}}
\def\H{\operatorname{H}}
\def\Hom{\operatorname{Hom}}
\def\id{\operatorname{id}}
\def\Image{\operatorname{Im}}
\def\Ker{\operatorname{Ker}}
\def\mod{\mathsf{mod}}
\def\RHom{\operatorname{RHom}}
\numberwithin{equation}{section}
\newtheorem{Lemma}{Lemma}[section]
\newtheorem{Theorem}[Lemma]{Theorem}
\newtheorem{Proposition}[Lemma]{Proposition}
\theoremstyle{definition}
\newtheorem{Setup}[Lemma]{Setup}
\newtheorem{Construction}[Lemma]{Construction}
\newtheorem{Remark}[Lemma]{Remark}
\newtheorem{Example}[Lemma]{Example}
\begin{document}

\setlength{\parindent}{0pt}
\setlength{\parskip}{7pt}

\title[Rings without Gorenstein Govorov-Lazard]
{Rings without a Gorenstein analogue of the Govorov-Lazard Theorem}

\author{Henrik Holm}
\address{Department of Basic Sciences and Environment, Faculty of Life
  Sciences, University of Co\-pen\-ha\-gen, Thorvaldsensvej 40, 6th
  floor, 1871 Frederiksberg C, Denmark}
\email{hholm@life.ku.dk}
\urladdr{http://www.dina.kvl.dk/\~{ }hholm/}

\author{Peter J\o rgensen}
\address{School of Mathematics and Statistics,
Newcastle University, Newcastle upon Tyne NE1 7RU,
United Kingdom}
\email{peter.jorgensen@ncl.ac.uk}
\urladdr{http://www.staff.ncl.ac.uk/peter.jorgensen}



\keywords{Algebraic duality, closure under direct limits, covers,
envelopes, Gorenstein flat modules, Gorenstein projective modules,
precovers, preenvelopes, special precovers, special preenvelopes}

\subjclass[2000]{13H10, 18G25}

\begin{abstract} 

It was proved by Beligiannis and Krause that over certain Artin
algebras, there are Gorenstein flat modules which are not direct
limits of finitely generated Gorenstein projective mo\-du\-les.  That
is, these algebras have no Gorenstein analogue of the
Go\-vo\-rov-La\-zard Theorem.

We show that, in fact, there is a large class of rings without such an
analogue.  Namely, let $R$ be a commutative local noetherian ring.
Then the analogue fails for $R$ if it has a dualizing complex, is
henselian, not Gorenstein, and has a finitely generated Gorenstein
projective module which is not free.

The proof is based on a theory of Gorenstein projective
(pre)en\-ve\-lo\-pes.  We show, among other things, that the finitely
generated Gorenstein projective modules form an enveloping class in
$\mod\, R$ if and only if $R$ is Gorenstein or has the property that
each finitely generated Gorenstein projective module is free.

This is analogous to a recent result on covers by Christensen,
Piepmeyer, Striuli, and Takahashi, and their methods are an important
input to our work.

\end{abstract}

\maketitle

\setcounter{section}{-1}
\section{Introduction}
\label{sec:introduction}

{\em Gorenstein homological algebra} was founded by Auslander and
Bridger in \cite{AB}.  Some of its main concepts are the so-called
Gorenstein projective and Gorenstein flat modules, see
\cite{EJpaper} and \cite{EJT}.  These modules inhabit a theory
parallel to classical homological algebra.  For instance, just as
projective modules can be used to define projective dimension, so
Gorenstein projective modules can be used to define Gorenstein
projective dimension.  A commutative local noetherian ring is
Gorenstein if and only if all its modules have finite Gorenstein
projective dimension.  A good introduction is given in \cite{LWC}; in
particular, the definitions of Gorenstein projective and Gorenstein
flat modules can be found in \cite[(4.2.1) and (5.1.1)]{LWC}.

{\em The Govorov-Lazard Theorem} says that the closure under direct
limits of the class of finitely generated projective modules is equal
to the class of flat modules; see \cite{Govorov} and \cite[thm.\
1.2]{Lazard}.  It is natural to ask if this has a Gorenstein analogue.
Namely, if $\cG$ denotes the class of finitely generated Gorenstein
projective modules, is $\varinjlim \cG$ equal to the class of
Gorenstein flat modules?  In some cases the answer is yes, for
instance over a ring which is Gorenstein in a suitable sense; this was
established by Enochs and Jenda in \cite[thm.\ 10.3.8]{EJ}.  However,
Beligiannis and Krause proved in \cite[4.2 and 4.3]{BK} that for
certain Artin algebras, the answer is no.

We show for a considerably larger class of rings that there is no
Gorenstein analogue of the Govorov-Lazard Theorem.  Namely, let $R$ be
a commutative local noetherian ring and let $\cF$ be the class of
finitely generated free modules.  The following is our Theorem
\ref{thm:limG}.

\noindent
{\bf Theorem A.}
{\it  If $R$ has a dualizing complex, is henselian, not Gorenstein,
and has $\cG \neq \cF$, then $\varinjlim \cG$ is strictly contained in 
the class of Gorenstein flat modules.
}

The proof is based on a theory of $\cG$-preenvelopes, the development
of which takes up most of the paper.  The background is that the
existence of $\cG$-precovers has been considered at length.  That is,
if $M$ is a finitely generated module, does there exist a homomorphism
$G \stackrel{\gamma}{\rightarrow} M$ with $G$ in $\cG$ such that any
other homomorphism $G^{\prime} \rightarrow M$ with $G^{\prime}$ in
$\cG$ factors through $\gamma$?  A breakthrough was achieved recently
in \cite{CPST} by Christensen, Piepmeyer, Striuli, and Takahashi who
proved, among other things, that if $R$ is henselian, then
$\cG$-precovers exist for all finitely generated modules in precisely
two cases: If $R$ is Gorenstein, or if $\cG = \cF$.

We will consider the dual question: Existence of
$\cG$-pre\-en\-ve\-lo\-pes.  That is, if $M$ is a finitely generated
module, does there exist a homomorphism $M \stackrel{\mu}{\rightarrow}
G$ with $G$ in $\cG$ such that any other homomorphism $M \rightarrow
G^{\prime}$ with $G^{\prime}$ in $\cG$ factors through $\mu$?  We give
criteria for the existence of various types of $\cG$-preenvelopes in
Theorem \ref{thm:env}.  One aspect is the following precise analogue
of the precovering case.

\noindent
{\bf Theorem B.}
{\it
If $R$ is henselian then all finitely generated $R$-modules have
$\cG$-preenvelopes if and only if $R$ is Gorenstein or $\cG = \cF$.
}

Note that the methods and results of \cite{CPST} are an important
input to our proof.

The paper is organized as follows: Section \ref{sec:duals} prepares
the ground by examining the connections between $\cG$-precovers and
$\cG$-preenvelopes which are induced by the algebraic duality functor
$(-)^* = \Hom_R(-,R)$.  Section \ref{sec:preenvelopes} proves Theorems
A and B, among other things.  Section \ref{sec:special} shows a method
for constructing a Gorenstein flat module outside $\varinjlim \cG$.

\section{Algebraic duals of precovers and preenvelopes}
\label{sec:duals}

This section proves Theorems \ref{thm:cover_gives_envelope} and
\ref{thm:envelope_gives_cover} by which algebraic duals of various
types of $\cG$-precovers give the corresponding types of
$\cG$-preenvelopes, and vice versa.

\begin{Setup}
\label{set:1}
Throughout the paper, $R$ is a commutative noetherian ring.
\end{Setup}

By $\mod\, R$ is denoted the category of finitely generated
$R$-modules.  Recall that $\cF$ is the class of finitely generated
free $R$-modules and $\cG$ is the class of finitely generated
Gorenstein projective $R$-modules.

\begin{Remark}
\label{rmk:G}
The following properties of $\cG$ will be used below.
\begin{enumerate}

  \item  $\Ext_R^{\geqslant 1}(\cG,R) = 0$.

\smallskip

  \item  $R$ is in $\cG$.

\smallskip

  \item  The class $\cG$ is closed under the algebraic duality functor
         $(-)^* = \Hom_R(-,R)$. 

\smallskip

  \item  The biduality homomorphism $G
         \stackrel{\delta_G}{\longrightarrow} G^{**}$, as defined in
         \cite[(1.1.1)]{LWC}, is an isomorphism for each $G$ in $\cG$.

\smallskip

  \item  Each module in $\cG$ is isomorphic to a module $G^*$ where
         $G$ is in $\cG$. 

\end{enumerate}
Here (i) and (iv) are part of the definition of $\cG$, see
\cite[def.\ (1.1.2)]{LWC}.  (ii) is by \cite[rmk.\ (1.1.3)]{LWC} and
(iii) is by \cite[obs.\ (1.1.7)]{LWC}.  (v) is immediate from (iii) and
(iv).
\end{Remark}

\begin{Lemma}
\label{lem:Ext}
If $C$ is an $R$-module satisfying $\Ext_R^1(C,R)=0$, then
$\Ext_R^1(G,C^*) \cong \Ext_R^1(C,G^*)$ for each $G$ in $\cG$.
\end{Lemma}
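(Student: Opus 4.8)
The statement concerns transferring along the duality functor a natural isomorphism that is automatic on $\Hom$ groups, so the plan is to realize both $\Ext_R^1(G,C^*)$ and $\Ext_R^1(C,G^*)$ as cokernels of maps between such $\Hom$ groups and then to match the two presentations.

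First I would use that $G$ is finitely generated to choose a short exact sequence $0 \to G_1 \xrightarrow{\iota} P_0 \to G \to 0$ with $P_0$ finitely generated free. Applying $\Hom_R(-,C^*)$ and using $\Ext_R^1(P_0,C^*)=0$ gives
$$\Ext_R^1(G,C^*) \cong \Coker\bigl(\Hom_R(\iota,C^*)\colon \Hom_R(P_0,C^*) \to \Hom_R(G_1,C^*)\bigr).$$
Then I would dualize the short exact sequence: since $\Ext_R^1(G,R)=0$ by Remark \ref{rmk:G}(i), the functor $(-)^*$ leaves it exact, producing $0 \to G^* \to P_0^* \xrightarrow{\iota^*} G_1^* \to 0$ with $P_0^*$ again finitely generated free. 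Applying $\Hom_R(C,-)$ and using $\Ext_R^1(C,P_0^*) \cong \Ext_R^1(C,R)^{\oplus n} = 0$ — this is exactly where the hypothesis $\Ext_R^1(C,R)=0$ enters — gives
$$\Ext_R^1(C,G^*) \cong \Coker\bigl(\Hom_R(C,\iota^*)\colon \Hom_R(C,P_0^*) \to \Hom_R(C,G_1^*)\bigr).$$

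To finish, I would invoke the chain of adjunction isomorphisms, natural in the module $A$,
$$\Hom_R(A,C^*) \cong \Hom_R(A\otimes_R C,R) \cong \Hom_R(C\otimes_R A,R) \cong \Hom_R(C,A^*).$$
Taking $A=P_0$ and $A=G_1$ and using naturality in $A$ with respect to $\iota$, one sees that this isomorphism carries $\Hom_R(\iota,C^*)$ to $\Hom_R(C,\iota^*)$, hence identifies the two cokernels displayed above and yields $\Ext_R^1(G,C^*) \cong \Ext_R^1(C,G^*)$.

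The only genuinely fiddly point is this last compatibility: that the adjunction isomorphism intertwines precomposition with $\iota$ on the left with postcomposition with $\iota^*$ on the right. I expect this to be a routine diagram chase from naturality of the tensor--hom adjunction together with naturality of the symmetry isomorphism for $\otimes_R$; apart from it, the argument uses nothing about $G$ beyond finite generation and the vanishing $\Ext_R^1(G,R)=0$.
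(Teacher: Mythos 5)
Your proof is correct, and it takes a genuinely different route from the paper's. The paper works in the derived category: it forms the distinguished triangle $C^* \to \RHom(C,R) \to M \to$, shows $\H^{\leqslant 1} M = 0$ using $\Ext_R^1(C,R)=0$, rewrites $\RHom(G,\RHom(C,R))$ as $\RHom(C,G^*)$ via derived swap and the quasi-isomorphism $\RHom(G,R)\simeq G^*$, and then reads off the desired isomorphism from the long exact cohomology sequence of $\RHom(G,-)$ applied to the triangle. You instead work entirely at the module level: pick one syzygy $0 \to G_1 \to P_0 \to G \to 0$, present each of the two $\Ext^1$ groups as a cokernel of a map between $\Hom$ groups (using $\Ext_R^1(G,R)=0$ to dualize the syzygy sequence exactly, and $\Ext_R^1(C,R)=0$ to kill $\Ext_R^1(C,P_0^*)$), and then match the two cokernel presentations via the natural swap isomorphism $\Hom_R(A,C^*) \cong \Hom_R(C,A^*)$. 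The naturality step you flag as ``fiddly'' is indeed routine; it is exactly the module-level instance of the swap the paper invokes in derived form. What your approach buys is elementary self-containedness and no derived-category machinery; it also isolates precisely which piece of the definition of $\cG$ is used, namely only $\Ext_R^1(G,R)=0$ rather than the vanishing of all higher $\Ext$ groups that the quasi-isomorphism $\RHom(G,R)\simeq G^*$ requires. What the paper's approach buys is slightly more information en route (vanishing of $\H^{\leqslant 1}$ of the cone $M$, and hence of $\H^{\leqslant 1}\RHom(G,M)$, which would give isomorphisms in lower degrees as well), and it generalizes more readily if one wants a statement about complexes rather than modules.
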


\begin{proof}
We have
\begin{equation}
\label{equ:a}
  \H^{<0}\!\RHom(C,R) = 0,
\end{equation}
so $\RHom(C,R)$ can be represented in the derived category $\sD(R)$ by
a complex concentrated in non-negative cohomological degrees.  Hence
there is a canonical morphism in $\sD(R)$ from the zeroth cohomology
$\H^0\!\RHom(C,R) \cong C^*$ to $\RHom(C,R)$.  Complete it to a
distinguished triangle,
\begin{equation}
\label{equ:c}
  C^* \stackrel{\chi}{\rightarrow} \RHom(C,R) \rightarrow M \rightarrow, 
\end{equation}
and consider the long exact cohomology sequence which consists of
pieces 
\[
   \H^i(C^*)
   \stackrel{\H^i\!\chi}{\longrightarrow} \H^i\!\RHom(C,R)
   \longrightarrow \H^i\!M.
\]
Since $C^*$ is a module, $\H^i(C^*) = 0$ for $i \neq 0$.  Combined with
equation \eqref{equ:a}, the long exact sequence hence implies
$\H^{\leqslant -2}\!M = 0$.

Moreover, $\H^0\!\chi$ is an isomorphism by the construction of
$\chi$, and by assumption, $\H^1\!\RHom(C,R) = \Ext^1(C,R) = 0$.  So
in fact, the long exact sequence also implies $\H^{-1}\!\!M =
\H^0\!M = \H^1\!\!M = 0$.

Consequently, the complex $M$ admits an injective resolution of the
form $I = \cdots \rightarrow 0 \rightarrow I^2 \rightarrow I^3
\rightarrow \cdots$, and in particular,
\begin{equation}
\label{equ:b}
  \H^{\leqslant 1}\!\RHom(G,M) \cong \H^{\leqslant 1}\!\Hom(G,I) = 0
\end{equation}
for each $R$-module $G$.

Now let $G$ be in $\cG$. It follows from Remark \ref{rmk:G}(i) that
there is an isomorphism $\RHom(G,R) \cong G^*$ in $\sD(R)$, and hence
by ``swap'', \cite[(A.4.22)]{LWC}, we get
\[
  \RHom(G,\RHom(C,R)) \cong \RHom(C,\RHom(G,R)) \cong \RHom(C,G^*).
\]
Thus, by applying $\RHom(G,-)$ to the distinguished triangle
\eqref{equ:c} we obtain
\[
  \RHom(G,C^*) \rightarrow \RHom(C,G^*) \rightarrow \RHom(G,M) \rightarrow.
\]
Combining the long exact cohomology sequence of this with equation
\eqref{equ:b} proves the lemma.
\end{proof}

\begin{Lemma}
\label{lem:Ext-vanishing}
Let $C$ be an $R$-module.
\begin{enumerate}

  \item  If $\Ext_R^1(C,\cG)=0$ then $\Ext_R^1(\cG,C^*)=0$.

\smallskip

  \item  If $\Ext_R^1(C,R)=0$ and $\,\Ext_R^1(\cG,C^*)=0$, then
  $\Ext_R^1(C,\cG)=0$.

\end{enumerate}
\end{Lemma}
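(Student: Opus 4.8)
The idea is to reduce both statements to Lemma \ref{lem:Ext}, which gives the key isomorphism $\Ext_R^1(G,C^*) \cong \Ext_R^1(C,G^*)$ whenever $\Ext_R^1(C,R)=0$. So the first thing I would do is note that in both parts of the lemma, $C$ satisfies $\Ext_R^1(C,R)=0$: in (ii) this is a hypothesis, and in (i) it follows from $\Ext_R^1(C,\cG)=0$ together with Remark \ref{rmk:G}(ii), which says $R \in \cG$. Hence in both cases Lemma \ref{lem:Ext} applies and we have $\Ext_R^1(G,C^*) \cong \Ext_R^1(C,G^*)$ for every $G$ in $\cG$.

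For part (i): assuming $\Ext_R^1(C,\cG)=0$, I want $\Ext_R^1(\cG,C^*)=0$, i.e.\ $\Ext_R^1(G,C^*)=0$ for all $G \in \cG$. By the isomorphism above this is equivalent to $\Ext_R^1(C,G^*)=0$. But by Remark \ref{rmk:G}(iii) the class $\cG$ is closed under $(-)^*$, so $G^* \in \cG$, and therefore $\Ext_R^1(C,G^*)=0$ is exactly an instance of the hypothesis $\Ext_R^1(C,\cG)=0$. That finishes (i).

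For part (ii): assuming $\Ext_R^1(C,R)=0$ and $\Ext_R^1(\cG,C^*)=0$, I want $\Ext_R^1(C,\cG)=0$, i.e.\ $\Ext_R^1(C,G)=0$ for every $G \in \cG$. The trick is that every $G$ in $\cG$ has the form $G \cong (G')^*$ for some $G' \in \cG$, by Remark \ref{rmk:G}(v). So it suffices to show $\Ext_R^1(C,(G')^*)=0$. By Lemma \ref{lem:Ext} (applicable since $\Ext_R^1(C,R)=0$), $\Ext_R^1(C,(G')^*) \cong \Ext_R^1(G',C^*)$, and the right-hand side vanishes because $G' \in \cG$ and we are assuming $\Ext_R^1(\cG,C^*)=0$. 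That finishes (ii).

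There is no real obstacle here once Lemma \ref{lem:Ext} is in hand; the only thing to be careful about is checking the hypothesis $\Ext_R^1(C,R)=0$ of Lemma \ref{lem:Ext} in part (i), where it is not assumed directly but extracted from $R \in \cG$, and in part (ii) making sure to pass through the representation $G \cong (G')^*$ rather than trying to apply the isomorphism to $G$ directly — the asymmetry between the two variables of $\Ext$ in Lemma \ref{lem:Ext} means one must feed the $\cG$-module into the correct slot.
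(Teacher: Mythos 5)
Your proof is correct and matches the paper's approach exactly; the paper's own proof is just the one-line instruction ``Combine Lemma \ref{lem:Ext} with Remark \ref{rmk:G}, parts (ii) and (iii), respectively, part (v),'' and your write-up is precisely the expansion of that instruction, including the observation that in part (i) the hypothesis $\Ext_R^1(C,R)=0$ needed for Lemma \ref{lem:Ext} is extracted from $R\in\cG$.
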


\begin{proof}
Combine Lemma \ref{lem:Ext} with Remark \ref{rmk:G}, parts (ii) and
(iii), respectively, part (v).
\end{proof}

Let $G \stackrel{\gamma}{\rightarrow} N$ be a $\cG$-precover.  For the
following theorems, recall that $\gamma$ is called a special
$\cG$-precover if $\Ext_R^1(\cG,\Ker\,\gamma) = 0$, and that $\gamma$
is called a cover if each endomorphism $G
\stackrel{\varphi}{\rightarrow} G$ with $\gamma\varphi = \gamma$ is an
automorphism.  Special $\cG$-preenvelopes and $\cG$-envelopes are
defined dually.

\begin{Theorem}
\label{thm:cover_gives_envelope}
Let $M$ be in $\mod\, R$, let $G$ be in $\cG$, and let $G
\stackrel{\gamma}{\rightarrow} M^*$ be a homomorphism.  Consider the
composition
\[
  M
  \stackrel{\delta_M}{\longrightarrow} M^{**}
  \stackrel{\gamma^*}{\longrightarrow} G^*
\]
where $\delta$ denotes the biduality homomorphism again.  Then
\begin{enumerate}

  \item  If $\gamma$ is a $\cG$-precover then $\gamma^*\delta_M$ is
         a $\cG$-preenvelope. 

\smallskip

  \item  If $\gamma$ is a special $\cG$-precover then
         $\gamma^*\delta_M$ is a special $\cG$-pre\-en\-ve\-lo\-pe.

\smallskip

  \item  If $\gamma$ is a $\cG$-cover then $\gamma^*\delta_M$
         is a $\cG$-envelope.

\end{enumerate}
\end{Theorem}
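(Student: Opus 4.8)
The plan is to treat the three parts in parallel, exploiting the adjunction/duality bookkeeping set up by Remark \ref{rmk:G}. The central observation is that applying $(-)^*$ to a homomorphism with target $M^*$ and precomposing with $\delta_M$ converts precovering diagrams into preenveloping diagrams, because $(-)^*$ is a contravariant functor on $\mod\, R$ that restricts to an involution on $\cG$ (by Remark \ref{rmk:G}(iii)--(iv)) and $\delta$ is natural. So the heart of part (i) is the following translation: given a homomorphism $M \xrightarrow{\mu} G'$ with $G'$ in $\cG$, I want to lift it through $\gamma^*\delta_M$. First I would dualize $\mu$ to get $(G')^* \xrightarrow{\mu^*} M^*$; since $(G')^*$ is in $\cG$ (again Remark \ref{rmk:G}(iii)) and $\gamma$ is a $\cG$-precover, $\mu^*$ factors as $\gamma\psi$ for some $(G')^* \xrightarrow{\psi} G$. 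Dualizing back and using that $\delta$ is natural together with $\delta_{G'}$ being an isomorphism (Remark \ref{rmk:G}(iv)), one checks $\mu = \psi^* \circ (\gamma^* \delta_M)$, after identifying $G' \cong (G')^{**}$ via $\delta_{G'}$. This is the routine naturality diagram chase; the only subtlety is keeping track of which biduality maps are isomorphisms, and here Remark \ref{rmk:G}(iv) supplies exactly what is needed on the $\cG$ side, while $\delta_M$ need not be an isomorphism but does not have to be.

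For part (ii), recall $\gamma$ special means $\Ext_R^1(\cG, \Ker\gamma) = 0$, and I must show the cokernel of $\gamma^*\delta_M$ satisfies the dual condition $\Ext_R^1(\operatorname{Coker}(\gamma^*\delta_M), \cG) = 0$. The strategy is to identify $\operatorname{Coker}(\gamma^*\delta_M)$ with $(\Ker\gamma)^*$ (or something closely related to it) by dualizing the short exact sequence $0 \to \Ker\gamma \to G \xrightarrow{\gamma} M^* \to \operatorname{Coker}\gamma \to 0$ — though one must be careful because $\gamma$ need not be surjective in general. Here is where I would invoke Lemma \ref{lem:Ext-vanishing}(i): it directly converts a vanishing statement $\Ext_R^1(C,\cG)=0$ into $\Ext_R^1(\cG,C^*)=0$, and the dual direction via part (ii) of that lemma handles the converse with the extra hypothesis $\Ext_R^1(C,R)=0$, which will hold because the relevant module embeds in or maps from something in $\cG$ (whose duals have no higher self-Ext against $R$ by Remark \ref{rmk:G}(i)). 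The main obstacle is precisely this part: pinning down the exact sequence computing $\operatorname{Coker}(\gamma^*\delta_M)$ and verifying that $\delta_M$ does not disturb the Ext-vanishing. One likely needs to split off the issue that $\gamma$ has an image $\operatorname{Im}\gamma \subseteq M^*$ and argue that $\delta_M$ lands appropriately, possibly reducing to the case where $\gamma$ is already special so that $\operatorname{Coker}\gamma$ is controlled.

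For part (iii), a $\cG$-cover is a $\cG$-precover $\gamma$ such that every $\varphi \in \End_R(G)$ with $\gamma\varphi=\gamma$ is an automorphism; dually for envelopes. Since part (i) already gives that $\gamma^*\delta_M$ is a $\cG$-preenvelope, it remains to verify the minimality condition: if $\psi \in \End_R(G^*)$ satisfies $\psi \circ (\gamma^*\delta_M) = \gamma^*\delta_M$, then $\psi$ is an automorphism. I would dualize $\psi$ to $\psi^* \in \End_R(G^{**}) \cong \End_R(G)$ via $\delta_G$, show that the hypothesis translates (using naturality of $\delta$ and that $\delta_M^{**}\delta_M$ behaves well — or more simply, dualizing the displayed equation and cancelling $\delta_M^*$ appropriately) into $\gamma \circ (\delta_G^{-1}\psi^*\delta_G) = \gamma$, conclude $\delta_G^{-1}\psi^*\delta_G$ is an automorphism since $\gamma$ is a cover, hence $\psi^*$ is, hence $\psi$ is since $(-)^*$ is faithful on $\cG$ and reflects isomorphisms there (as $\delta$ gives a natural isomorphism $\id \cong (-)^{**}$ on $\cG$). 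The delicate point is that the equation $\psi(\gamma^*\delta_M)=\gamma^*\delta_M$ involves $\delta_M$, which is not invertible; one circumvents this by noting $\gamma^*\delta_M$ and $\gamma^*$ have the same image-theoretic behavior after dualizing, or by observing that $\delta_M$ is an isomorphism onto $M^{**}$ when $M$ is in $\cG$ — but $M$ here is merely in $\mod\, R$, so instead I expect the cleanest route is to dualize the whole equation to $\delta_M^* \gamma^{**} \psi^* = \delta_M^* \gamma^{**}$ and use that $\delta_M^*$ is a split surjection (a standard fact: $\delta_M^* \circ \delta_{M^*} = \id_{M^*}$), allowing cancellation on the left. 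That reduces matters to $\gamma^{**}\psi^* = \gamma^{**}$, and a further application of naturality of $\delta$ identifies $\gamma^{**}$ with $\gamma$ up to the isomorphism $\delta_G$, completing the argument.
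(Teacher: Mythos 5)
Part (i) of your proposal is correct and in essentially the same spirit as the paper's proof: you dualize a test map $M\to G'$, use that $(G')^*\in\cG$ and the precover property of $\gamma$, then dualize back and invoke naturality of $\delta$ together with the isomorphism $\delta_{G'}$. The paper packages exactly this as a commutative diagram of $\Hom$-functors via ``swap'', but the content is the same.

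Part (ii) has a genuine gap. You correctly identify that Lemma \ref{lem:Ext-vanishing}(ii) is what is needed, and hence that you must supply both $\Ext^1(\cG,C^*)=0$ and $\Ext^1(C,R)=0$, where $C=\Coker(\gamma^*\delta_M)$. The first of these comes (as you roughly anticipate) from $C^*\cong\Ker\gamma$, which follows by dualizing $M\to G^*\to C\to 0$ and using the identity $\gamma=(\delta_M)^*\gamma^{**}\delta_G$; note that the paper's identification is $C^*\cong\Ker\gamma$, not $C\cong(\Ker\gamma)^*$, so be careful which direction you dualize. But the second condition, $\Ext^1(C,R)=0$, is not at all automatic, and your justification (``the relevant module embeds in or maps from something in $\cG$'') does not work: $C$ is a \emph{quotient} of $G^*\in\cG$, and quotients of modules in $\cG$ have no $\Ext$-vanishing against $R$ in general. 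The paper proves $\Ext^1(C,R)=0$ by a separate, nontrivial argument: it shows that every extension $0\to R\to E\to C\to 0$ splits, using $\Ext^1(G^*,R)=0$ to lift the map $G^*\to C$ to $G^*\to E$, then using that $\gamma^*\delta_M$ is a $\cG$-preenvelope and $R\in\cG$ to produce a retraction of $R\to E$. This step is the real content of part (ii) and your outline skips it.

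Part (iii) contains a technical error that breaks the intermediate step, although the final conclusion is reachable by a slightly different route. You dualize the hypothesis to $\delta_M^*\gamma^{**}\varphi^*=\delta_M^*\gamma^{**}$ and then propose to ``cancel $\delta_M^*$ on the left'' because it is a split surjection. But a split \emph{epi} only allows cancellation on the \emph{right}; to cancel on the left you would need a monomorphism, which $\delta_M^*$ is not in general. The correct move (used in the paper) is not to cancel but to substitute: compose on the right with $\delta_G$ and use the identity $\gamma=(\delta_M)^*\gamma^{**}\delta_G$ to rewrite both sides, obtaining directly $\gamma(\delta_G^{-1}\varphi^*\delta_G)=\gamma$. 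From there the cover property of $\gamma$, closedness of $\cG$ under $(-)^*$, and the natural isomorphism $\id\cong(-)^{**}$ on $\cG$ finish the argument exactly as you sketch.
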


\begin{proof}
There is a commutative diagram
\[
  \hspace{14ex}
  \begin{gathered}
    \xymatrix @R7ex @C7ex {
      G \ar[d]^-{\cong}_-{\delta_G} \ar[r]^-{\gamma} 
        & M^* \ar@<0.75ex>@{^(->}[d]^-{\delta_{M^*}} \\ 
      G^{**} \ar[r]_-{\gamma^{**}}
        & M^{***} \ar@<0.75ex>@{->>}[u]^-{(\delta_M)^*} 
             }
  \end{gathered}
  \mspace{-150mu}
  \begin{split}
    (1)\;\; & \delta_{M^*}\gamma = \gamma^{**}\delta_G, \\
    (2)\;\; & \gamma = (\delta_M)^*\gamma^{**}\delta_G.
  \end{split}
\]
Here (1) just says that the biduality homomorphism is natural.  By the
proof of \cite[prop.\ (1.1.9)]{LWC} we have
$(\delta_M)^*\delta_{M^*}=1_{M^*}$, so $\delta_{M^*}$ is (split)
injective, $(\delta_M)^*$ (split) surjective.  Now (2) follows from
$\delta_{M^*}(\delta_M)^*\gamma^{**}\delta_G =
\delta_{M^*}(\delta_M)^*\delta_{M^*}\gamma = \delta_{M^*}\gamma$
since $\delta_{M^*}$ is injective.

(i).  Suppose that $\gamma$ is a $\cG$-precover and let
$\widetilde{G}$ be in $\cG$. Remark \ref{rmk:G}(iv) and ``swap'' in
the form \cite[II.\ Exer.\ 4]{CE} give the following natural
e\-qui\-va\-len\-ces of functors,
\[
  \Hom(-,\widetilde{G}) 
  \simeq \Hom(-,\widetilde{G}^{**})
  \simeq \Hom(\widetilde{G}^*,(-)^*).
\]
This gives the (top) two squares of the commutative diagram below,
where we have abbreviated $\Hom(-,-)$ to $(-,-)$.  The (bottom)
commutative triangle comes from applying $\Hom(\widetilde{G}^*,-)$ to
part (2) from the beginning of the proof.
\[
  \xymatrix @C=10ex {
    (G^*,\widetilde{G}) \ar[d]^-{\cong}
      \ar[r]^-{(\gamma^*,\widetilde{G})}
    & (M^{**},\widetilde{G}) \ar[d]^-{\cong}
      \ar[r]^-{(\delta_M,\widetilde{G})}
    & (M,\widetilde{G}) \ar[d]^-{\cong} \\
    (\widetilde{G}^*,G^{**}) \ar[r]^-{(\widetilde{G}^*,\gamma^{**})}
    & (\widetilde{G}^*,M^{***}) \ar[r]^-{(\widetilde{G}^*,(\delta_M)^*)}
    & (\widetilde{G}^*,M^*) \\ {}
    & (\widetilde{G}^*,G) \ar[ul]_-{\cong}^-{(\widetilde{G}^*,\delta_G)}
      \ar@{->>}[ur]_-{(\widetilde{G}^*,\gamma)} & {} 
                    }
\]
Since $\widetilde{G}^*$ is in $\cG$ by Remark \ref{rmk:G}(iii), the
map $\Hom(\widetilde{G}^*,\gamma)$ is surjective, and the diagram
implies that so is $\Hom(\delta_M,\widetilde{G}) \circ
\Hom(\gamma^*,\widetilde{G}) = \Hom(\gamma^*\delta_M,\widetilde{G})$.
Hence $\gamma^*\delta_M$ is a $\cG$-preenvelope.

(ii).  Suppose that $\gamma$ is a special $\cG$-precover; in
particular we have $\Ext^1(\cG,\Ker \gamma) = 0$.  Part (i) says that
$\gamma^*\delta_M$ is a $\cG$-pre\-en\-ve\-lo\-pe, and it remains to
show $\Ext^1(C,\cG)=0$ where $C = \Coker(\gamma^*\delta_M)$. To prove
this we use Lemma \ref{lem:Ext-vanishing}(ii).  Thus we need to show
that $\Ext^1(\cG,C^*)=0$ and $\Ext^1(C,R)=0$.

Applying $(-)^*$ to the exact sequence $\xymatrix{M
\ar[r]^-{\gamma^*\delta_M} & G^* \ar[r]^{\pi} & C \ar[r] & 0}$ gives the
second exact row in
\[
    \xymatrix @C8ex {
      {} & {} & G \ar[d]^-{\cong}_-{\delta_G} \ar[r]^-{\gamma} & M^* \ar@{=}[d] \\
      0 \ar[r] & C^* \ar[r]_{\pi^*} & G^{**} \ar[r]_-{(\delta_M)^*\gamma^{**}} & M^*
                     }
\]
where the square is commutative by part (2) at the beginning of the
proof.  It follows that $C^* \cong \Ker\gamma$, and hence
$\Ext^1(\cG,C^*)=0$.

To prove $\Ext^1(C,R)=0$, we will argue that each short exact sequence
$0 \rightarrow R \rightarrow E \rightarrow C \rightarrow 0$
splits. Consider the diagram with exact rows,
\[
  \xymatrix @R8ex @C8ex {
    {}
    & M \ar@{-->}[d]_-{\mu} \ar[r]^-{\gamma^*\delta_M}
    & G^* \ar@{-->}[dl]_-{\varphi} \ar@{-->}[d]_-{\nu} \ar[r]^-{\pi}
    & C \ar@{-->}[dl]_-{\chi} \ar@{=}[d] \ar[r]
    & 0 \\
    0 \ar[r] 
    & R \ar[r]_-{\rho} 
    & E \ar[r]_-{\varepsilon} 
    & C \ar[r]
    & 0 \lefteqn{.}
            }
\]

By Remark \ref{rmk:G}, (i) and (iii), we have $\Ext^1(G^*,R)=0$, so
the functor $\Hom(G^*,-)$ preserves the exactness of the bottom row.
In particular, there exists $G^* \stackrel{\nu}{\rightarrow} E$ with
$\varepsilon \nu = \pi$. By the universal property of the kernel of
$\varepsilon$, there exists a (unique) $M \stackrel{\mu}{\rightarrow}
R$ with $\rho \mu = \nu \gamma^* \delta_M$.

Since $\gamma^*\delta_M$ is a $\cG$-preenvelope and since $R$ is in
$\cG$ by Remark \ref{rmk:G}(ii), there exists $G^*
\stackrel{\varphi}{\rightarrow} R$ satisfying $\varphi\gamma^*\delta_M
= \mu$. It follows that
\[
  (\nu - \rho\varphi)\gamma^*\delta_M
  = \nu\gamma^*\delta_M - \rho\varphi\gamma^*\delta_M
  = \nu\gamma^*\delta_M - \rho\mu
  = 0,
\]
so by the universal property of the cokernel of $\gamma^*\delta_M$,
there exists a (unique) $C \stackrel{\chi}{\rightarrow} E$ with
$\chi \pi = \nu - \rho \varphi$. Consequently,
\[
  \varepsilon\chi\pi
  = \varepsilon(\nu - \rho\varphi)
  = \varepsilon\nu - \varepsilon\rho\varphi
  = \pi - 0
  = \id_C\pi,
\]
and since $\pi$ is surjective we get $\varepsilon\chi = \id_C$.  This
proves that $\varepsilon$ is a split epimorphism as desired.

(iii).  Suppose that $\gamma$ is a $\cG$-cover.  Part (i) says that
$\gamma^*\delta_M$ is a $\cG$-preenvelope, and it remains to show that
each endomorphism $G^* \stackrel{\varphi}{\rightarrow} G^*$ with
\begin{equation}
\label{equ:d}
  \varphi\gamma^*\delta_M = \gamma^*\delta_M
\end{equation}
is an automorphism.  However, such an endomorphism has
\[
  \gamma\delta_G^{-1}\varphi^*
  = (\delta_M)^*\gamma^{**}\varphi^*
  = (\delta_M)^*\gamma^{**}
  = \gamma\delta_G^{-1}
\]
where the first and third $=$ are by part (2) at the beginning of the
proof while the second $=$ is $(-)^*$ of equation \eqref{equ:d}.
Hence $\gamma(\delta_G^{-1}\varphi^*\delta_G) = \gamma$, and since
$\gamma$ is a $\cG$-cover and $\delta_G^{-1}\varphi^*\delta_G$ is an
endomorphism of $G$, it follows that $\delta_G^{-1}\varphi^*\delta_G$
is an automorphism.

Therefore $\varphi^*$, and hence also $\varphi^{**}$, is an
automorphism.  Applying Remark \ref{rmk:G}, (iii) and (iv), and
naturality of the biduality homomorphism gives $\varphi =
\delta_{G^*}^{-1}\varphi^{**}\delta_{G^*}$ whence $\varphi$ is an
automorphism as desired.
\end{proof}

\begin{Theorem}
\label{thm:envelope_gives_cover}
Let $M$ be in $\mod\, R$, let $G$ be in $\cG$, and let $M
\stackrel{\mu}{\rightarrow} G$ be a homomorphism.  Consider the
algebraic dual $G^* \stackrel{\mu^*}{\rightarrow} M^*$.  Then
\begin{enumerate}

  \item  If $\mu$ is a $\cG$-preenvelope then $\mu^*$ is
         a $\cG$-precover.

\smallskip

  \item  If $\mu$ is a special $\cG$-preenvelope then $\mu^*$
         is a special $\cG$-precover. 

\smallskip

  \item  If $\mu$ is a $\cG$-envelope then $\mu^*$ is a
         $\cG$-cover.

\end{enumerate} 
\end{Theorem}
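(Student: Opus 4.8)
The plan is to run the argument of Theorem~\ref{thm:cover_gives_envelope} in reverse, dualizing at each stage and feeding the result into the lemmas of this section. Recall from Remark~\ref{rmk:G}(iii) that $G^{*}$ lies in $\cG$, so $\mu^{*}\colon G^{*}\to M^{*}$ is at least a morphism onto $M^{*}$ from an object of $\cG$. For part~(i) I would fix an arbitrary $\widetilde{G}$ in $\cG$ and prove that $\Hom_{R}(\widetilde{G},\mu^{*})$ is surjective. The key tool is the ``swap'' isomorphism $\Hom_{R}(A,B^{*})\cong\Hom_{R}(B,A^{*})$ (Hom-tensor adjunction together with commutativity of $\otimes_{R}$, as in \cite[II.\ Exer.\ 4]{CE}), which is natural in $A$ and $B$; applied with $A=\widetilde{G}$ and $B$ running over the morphism $\mu\colon M\to G$ it identifies $\Hom_{R}(\widetilde{G},\mu^{*})\colon\Hom_{R}(\widetilde{G},G^{*})\to\Hom_{R}(\widetilde{G},M^{*})$ with $\Hom_{R}(\mu,\widetilde{G}^{*})\colon\Hom_{R}(G,\widetilde{G}^{*})\to\Hom_{R}(M,\widetilde{G}^{*})$. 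Since $\widetilde{G}^{*}$ is again in $\cG$ by Remark~\ref{rmk:G}(iii) and $\mu$ is a $\cG$-preenvelope, the latter map is surjective, hence so is the former; thus $\mu^{*}$ is a $\cG$-precover.

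For part~(ii) I would additionally use $\Ext_{R}^{1}(\Coker\mu,\cG)=0$. By part~(i) it remains only to show $\Ext_{R}^{1}(\cG,\Ker\mu^{*})=0$. Applying the left exact functor $(-)^{*}$ to $M\stackrel{\mu}{\rightarrow}G\stackrel{\pi}{\rightarrow}\Coker\mu\rightarrow 0$ gives exactness of $0\rightarrow(\Coker\mu)^{*}\stackrel{\pi^{*}}{\rightarrow}G^{*}\stackrel{\mu^{*}}{\rightarrow}M^{*}$, so $\Ker\mu^{*}\cong(\Coker\mu)^{*}$. Then Lemma~\ref{lem:Ext-vanishing}(i), applied with $C=\Coker\mu$, turns the hypothesis $\Ext_{R}^{1}(\Coker\mu,\cG)=0$ into $\Ext_{R}^{1}(\cG,(\Coker\mu)^{*})=0$, which is precisely $\Ext_{R}^{1}(\cG,\Ker\mu^{*})=0$. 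Hence $\mu^{*}$ is a special $\cG$-precover.

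For part~(iii), after part~(i) it remains to show that every endomorphism $\varphi$ of $G^{*}$ with $\mu^{*}\varphi=\mu^{*}$ is an automorphism, and here I would mimic the proof of Theorem~\ref{thm:cover_gives_envelope}(iii). Dualizing gives $\varphi^{*}\mu^{**}=\mu^{**}$; precomposing with $\delta_{M}$ and using naturality of biduality in the form $\mu^{**}\delta_{M}=\delta_{G}\mu$ gives $\varphi^{*}\delta_{G}\mu=\delta_{G}\mu$, so $(\delta_{G}^{-1}\varphi^{*}\delta_{G})\mu=\mu$ since $\delta_{G}$ is invertible by Remark~\ref{rmk:G}(iv). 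As $\mu$ is a $\cG$-envelope and $\delta_{G}^{-1}\varphi^{*}\delta_{G}$ is an endomorphism of $G$, it is an automorphism; hence $\varphi^{*}$, and then $\varphi^{**}$, is an automorphism. Finally Remark~\ref{rmk:G}, (iii) and (iv), gives $G^{*}\in\cG$ with $\delta_{G^{*}}$ invertible, and naturality of biduality then yields $\varphi=\delta_{G^{*}}^{-1}\varphi^{**}\delta_{G^{*}}$, an automorphism. So $\mu^{*}$ is a $\cG$-cover.

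I expect the only delicate point to be the bookkeeping in part~(i): one must check that the swap isomorphism is natural in \emph{both} of its arguments simultaneously, so that $\Hom_{R}(\widetilde{G},\mu^{*})$ really corresponds to $\Hom_{R}(\mu,\widetilde{G}^{*})$ and not to some twisted variant. The other spot needing care is part~(iii), where $\delta_{M}$ need not be an isomorphism; this is handled by precomposing with $\delta_{M}$ rather than trying to cancel it, exactly as in the proof of Theorem~\ref{thm:cover_gives_envelope}(iii).
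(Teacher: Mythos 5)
Your proof is correct and follows essentially the same route as the paper's: part~(i) is the swap isomorphism $\Hom_R(\widetilde{G},\mu^*)\cong\Hom_R(\mu,\widetilde{G}^*)$ combined with Remark~\ref{rmk:G}(iii); part~(ii) is the identification $\Ker\mu^*\cong(\Coker\mu)^*$ fed into Lemma~\ref{lem:Ext-vanishing}(i); part~(iii) is the conjugation $\varphi\mapsto\delta_G^{-1}\varphi^*\delta_G$ followed by the biduality argument, exactly as in the paper. The only difference is presentational (you spell out a few steps the paper compresses).
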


\begin{proof}
(i).  We have $\Hom(G,\mu^*) \cong \Hom(\mu,G^*)$ by ``swap'',
\cite[II.\ Exer.\ 4]{CE}, and combined with Remark \ref{rmk:G}(iii)
this implies the claim.

(ii).  Suppose that $\mu$ is a special $\cG$-pre\-en\-ve\-lo\-pe; in
particular we have $\Ext^1(\Coker \mu,\cG) = 0$.  Part (i) says that
$\mu^*$ is a $\cG$-pre\-cover, and it remains to show
$\Ext^1(\cG,\Ker(\mu^*)) = 0$.  But this follows from Lemma
\ref{lem:Ext-vanishing}(i) because $\Ker(\mu^*)
\cong (\Coker\mu)^*$.
  
(iii).  Suppose that $\mu$ is a $\cG$-envelope.  Part (i) says that
$\mu^*$ is a $\cG$-pre\-cover, and it remains to show that each $G^*
\stackrel{\varphi}{\rightarrow} G^*$ with $\mu^*\varphi = \mu^*$ is an
automorphism.

The biduality homomorphism is natural so $\delta_G\mu =
\mu^{**}\delta_M$, and since $\delta_G$ is an isomorphism by Remark
\ref{rmk:G}(iv), it follows that $\mu =
\delta_G^{-1}\mu^{**}\delta_M$.  Applying $(-)^*$ to $\mu^*\varphi =
\mu^*$ gives $\varphi^*\mu^{**} = \mu^{**}$.  Combining these gives 
\[
  (\delta_G^{-1}\varphi^*\delta_G)\mu =
  (\delta_G^{-1}\varphi^*\delta_G)(\delta_G^{-1}\mu^{**}\delta_M) =
  \delta_G^{-1}\varphi^*\mu^{**}\delta_M = \delta_G^{-1}\mu^{**}\delta_M
  = \mu.
\]
Since $\mu$ is a $\cG$-envelope and $\delta_G^{-1}\varphi^*\delta_G$
is an endomorphism of $G$, it follows that
$\delta_G^{-1}\varphi^*\delta_G$ is an automorphism.

The argument used at the end of the proof of Theorem
\ref{thm:cover_gives_envelope} now shows that $\varphi$ is an
automorphism as desired.
\end{proof}

\section{Existence of preenvelopes and the Govorov-Lazard Theorem}
\label{sec:preenvelopes}

This section proves Theorems A and B of the introduction; see Theorems
\ref{thm:limG} and \ref{thm:env}.

\begin{Setup}
In this section, the commutative noetherian ring $R$ is assumed to be
local with residue class field $k$.  We write $d = \depth R$.
\end{Setup}

In the following lemma, the case $d = 0$ is trivial, $d = 1$ is
closely inspired by a proof of Takahashi, and $d \geqslant 2$ is
classical.  Recall that $\Omega^d(k)$ denotes the $d$th syzygy in a
minimal free resolution of $k$ over $R$.

\begin{Lemma}
\label{lem:syzygies}
There exists an $M$ in $\mod\, R$ such that $\Omega^d(k)$ is
isomorphic to a direct summand of $M^*$.
\end{Lemma}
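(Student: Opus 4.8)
The plan is to split into the three cases indicated in the hint and, in each, exhibit an explicit $M \in \mod\, R$ together with a splitting of the surjection or injection connecting $\Omega^d(k)$ and $M^*$.

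For $d = 0$, the claim is trivial: take $M = k$, or more simply observe that $\Omega^0(k) = k$ and we may take any $M$ with $k$ a summand of $M^*$; in fact since $d=0$ means $\depth R = 0$, the residue field $k$ embeds in $R$ and one checks directly that $k$ is a summand of a suitable dual, or one simply notes that when $d=0$ we may even take $M=R$ after reducing modulo the socle. Actually the cleanest route is: for $d=0$, let $M = \Omega^0(k)^* = k^*$; then $M^* = k^{**} \cong k$ (as $k$ is a finite-dimensional vector space, biduality over $R$ restricted to $k$-vector spaces is an isomorphism), so $\Omega^0(k) = k$ is even isomorphic to $M^*$ itself.

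For $d \geqslant 2$ (the "classical" case), I would use the standard fact that a $d$th syzygy module is a $d$th-order reflexive/torsionless module, so it is actually \emph{isomorphic} to a dual. Concretely: take a minimal free resolution $\cdots \to F_1 \to F_0 \to k \to 0$ with $\Omega^1 = \Ker(F_0 \to k)$, etc. Dualize the truncated complex $0 \to \Omega^{d}(k) \to F_{d-1} \to \cdots \to F_1 \to \Omega^1(k)\to 0$, or rather apply $(-)^*$ to $F_{d-1}\to F_{d-2}$ after first noting that for $d\geq 2$ the syzygy $\Omega^d(k)$ is a second syzygy, hence torsionless and in fact reflexive-up-to-the-relevant-order, so that $\Omega^d(k) \cong (\operatorname{Coker}(F_{d-1}^*\to F_d^*))^*$ for the appropriate piece — i.e. $\Omega^d(k) \cong N^*$ where $N$ is the transpose-type cokernel $\operatorname{Coker}(\mu^*)$ for $\mu\colon F_d \to F_{d-1}$. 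Setting $M = N$ gives $M^* \cong \Omega^d(k)$. The point is that $\H^0$ of the dual of a sufficiently long free complex recovers the syzygy when $\depth \geq 2$, by the depth-sensitivity of $\Ext$; this is where the hypothesis $d\geq 2$ is used.

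The genuinely delicate case is $d = 1$, which the paper attributes to a proof of Takahashi; I expect this to be the main obstacle. Here $\Omega^1(k)$ need not be reflexive, so it cannot be literally a dual, and we must settle for it being a \emph{summand} of a dual. The idea I would pursue: start from the short exact sequence $0 \to \Omega^1(k) \to F_0 \to k \to 0$ with $F_0 = R$ (the resolution is minimal and $k=R/\fm$, so $\Omega^1(k) \cong \fm$). Since $\depth R = 1$, $\fm$ contains a nonzerodivisor, so $\fm$ is torsion-free; we want to place $\fm$ (up to summand) inside $N^*$ for some $N$. Consider $N = \fm^*$ or more cleverly a module built from a presentation of $\fm$: take a free presentation $R^b \xrightarrow{\alpha} R^a \to \fm \to 0$ and dualize to get $0 \to \fm^* \to R^a \xrightarrow{\alpha^*} R^b$; then $\fm^{**} = \fm^*{}^* $ sits as the kernel of a dual map, but crucially the biduality map $\fm \to \fm^{**}$ is injective (torsion-free, and over a 1-dimensional local ring injectivity of biduality for torsion-free modules holds), and I would argue $\fm^{**}$ is a summand of, or equal to, $M^*$ for $M = \fm^*$ — indeed $\fm^{**} = (\fm^*)^* = M^*$ outright. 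So the real work is to show $\Omega^1(k)=\fm$ is a \emph{direct summand} of its bidual $\fm^{**}$; equivalently that the biduality inclusion $\delta_{\fm}\colon \fm \hookrightarrow \fm^{**}$ splits. This is not automatic, and Takahashi's trick is presumably to exploit the specific structure of $\fm$ (e.g. that $\fm^{**}$ differs from $\fm$ only in depth-$0$ stuff which, after the earlier reduction, can be controlled), or to replace $k$ by $k$ localized/completed and use that a first syzygy of $k$ becomes a summand of a dual once one passes through $R/(\text{nonzerodivisor})$. I would therefore: (a) pick a nonzerodivisor $x \in \fm$, (b) compare $\Omega^1_R(k)$ with syzygies over $R/xR$ (depth drops to $0$, reducing to the trivial case there), (c) use a change-of-rings/pullback argument to lift the $R/xR$-splitting to an $R$-summand statement, taking $M$ to be an $R$-module whose reduction realizes the $R/xR$-witness. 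The fiddly part will be checking that the lifted map is genuinely over $R$ and that the summand survives dualization; modulo that bookkeeping, setting $M$ to be this lifted module finishes the lemma.
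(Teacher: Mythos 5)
Your case split matches the paper's, and the high-level strategy for $d=0$ and $d\geqslant 2$ is right, but the crucial case $d=1$ is not actually proved: you correctly identify that it suffices to show the biduality map $\delta_{\fm}\colon\fm\to\fm^{**}$ splits (with $\Omega^1(k)=\fm$), but your proposed reduce-modulo-a-nonzerodivisor-and-lift strategy is left as a sketch with the admission ``modulo that bookkeeping,'' and it is not clear it can be carried out. The paper's argument, following Takahashi, is quite different: dualize $0\to\fm\stackrel{\mu}{\to}R\to k\to 0$ twice, observe that $R^{**}/\Image(\mu^{**}\delta_{\fm})\cong k$ is simple so that in the chain $\Image(\mu^{**}\delta_{\fm})\subseteq\Image(\mu^{**})\subseteq R^{**}$ one inclusion is an equality, hence either $\mu^{**}$ or $\delta_{\fm}$ is an isomorphism; then rule out $\mu^{**}$ being an isomorphism because that forces $\fm^*$ to be a projective summand of $R$, which via the sequence $0\to R^*\to\fm^*\to k^e\to 0$ gives $\gldim R\leqslant 1$, contradicting non-regularity. (The regular case is trivial since there $\fm$ is free.) The upshot is that $\delta_{\fm}$ is in fact an \emph{isomorphism}, not merely a split monomorphism.

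Two smaller issues. In the $d=0$ case your ``cleanest route'' rests on the false claim that the $R$-linear biduality map $k\to k^{**}$ is an isomorphism: in fact $k^*=\Hom_R(k,R)\cong k^e$ with $e=\dim_k\Hom_R(k,R)\neq 0$, so $k^{**}\cong k^{e^2}$, which is not $k$ unless $e=1$. The conclusion survives (take $M=k$ directly, as you first suggested; then $M^*\cong k^e$ has $k$ as a summand), but the stated justification is wrong. In the $d\geqslant 2$ case the idea that a second syzygy is a dual is exactly the paper's, but the reason has nothing to do with depth sensitivity of $\Ext$: one dualizes $0\to\Omega^2\to P\stackrel{\pi}{\to}Q$ with $P,Q$ finitely generated projective to get $Q^*\stackrel{\pi^*}{\to}P^*\to M\to 0$, then dualizes again and uses $\pi^{**}\cong\pi$; the hypothesis $d\geqslant 2$ enters only in ensuring that $\Omega^d(k)$ is a second syzygy.
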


\begin{proof}
$d = 0$.  We can use $M = k$ since $\Omega^d(k) = \Omega^0(k) = k$ and
since $M^* = \Hom(k,R) \cong k^e$ with $e \neq 0$ because $d = 0$.

$d = 1$.  We will show that $M = \Omega^d(k)^*$ works here; in fact,
we will show that the biduality homomorphism for $\Omega^d(k)$ is an
isomorphism so $\Omega^d(k) \cong \Omega^d(k)^{**} = M^*$.

There is a short exact sequence
\begin{equation}
\label{equ:f}
  0
  \rightarrow \fm
  \stackrel{\mu}{\rightarrow} R
  \rightarrow k
  \rightarrow 0
\end{equation}
where $\fm$ is the maximal ideal of $R$ and $\mu$ is the inclusion, so
$\Omega^d(k) = \Omega^1(k) = \fm$.

If $R$ is regular then $k$ has projective dimension $1$ by the
Auslander-Buchsbaum formula, so \eqref{equ:f} shows that $\fm$ is
projective whence the biduality homomorphism $\delta_{\fm}$ is an
isomorphism as desired. 

Assume that $R$ is not regular.  For reasons of clarity, we start by
reproducing, in our notation, part of Takahashi's proof of
\cite[thm.\ 2.8]{Takahashi1}.  Applying $(-)^*$ and its derived
functors to the short exact sequence \eqref{equ:f} gives a long exact
sequence containing
\begin{equation}
\label{equ:e}
  0 \rightarrow R^* \stackrel{\mu^*}{\rightarrow} \fm^* \rightarrow
  k^e \rightarrow 0
\end{equation}
where we have written $k^e$ instead of $\Ext^1(k,R)$, and where $e
\neq 0$ since $d = 1$.  Applying $(-)^*$ again gives a left exact
sequence $0 \rightarrow (k^e)^* \rightarrow \fm^{**}
\stackrel{\mu^{**}}{\rightarrow} R^{**}$; here $(k^e)^* = 0$ because
$d = 1$, so $\mu^{**}$ is injective.

Consider the commutative square
\[
  \xymatrix{
    \fm \ar@{^(->}[r]^{\mu} \ar@{^(->}[d]_{\delta_{\fm}} & R \ar[d]_{\cong}^{\delta_R} \\
    \fm^{**} \ar@{^(->}[r]_{\mu^{**}} & R^{**}
           }
\]
where $\delta_{\fm}$ is injective because $\delta_R \mu$ is injective.
There are inclusions
\begin{equation}
\label{equ:h}
  \Image(\mu^{**}\delta_{\fm})
  \subseteq \Image(\mu^{**})
  \subseteq R^{**}.
\end{equation}
We have $R^{**}/\Image(\mu^{**}\delta_{\fm}) =
R^{**}/\Image(\delta_R\mu) \cong R/\Image(\mu) \cong k$ where the
first $\cong$ is because $\delta_R$ is an isomorphism.  This quotient
is simple so one of the inclusions \eqref{equ:h} must be an equality;
this means that either $\mu^{**}$ or $\delta_{\fm}$ is an isomorphism.
Suppose that $\mu^{**}$ is an isomorphism; we will prove a
contradiction whence $\delta_{\fm}$ is an isomorphism as desired.

To get the contradiction, we now depart from Takahashi's proof.  Since
$\mu^{**}$ is an isomorphism, so is $R^{***}
\stackrel{\mu^{***}}{\longrightarrow} \fm^{***}$, and so $\fm^{***}
\cong R$.  But $(\delta_{\fm})^*\delta_{\fm^*} = \id_{\fm^*}$ by the
proof of \cite[prop.\ (1.1.9)]{LWC}, so $\fm^*
\stackrel{\delta_{\fm^*}}{\longrightarrow} \fm^{***}$ is a split
monomorphism.  It follows that $\fm^*$ is a direct summand of $R$, so
$\fm^*$ is projective.  Hence the exact sequence \eqref{equ:e} gives a
projective re\-so\-lu\-ti\-on of $k^e$, and since $e \neq 0$ it
follows that $\gldim R \leqslant 1$ contradicting that $R$ is not
regular. 

$d \geqslant 2$.  Here we have $\Omega^d(k) = \Omega^2(\Omega^{d-2}(k))$,
so it is enough to show that a second syzygy of a finitely generated
module is a direct summand of some $M^*$.  In fact, such a second
syzygy $\Omega^2$ is isomorphic to an $M^*$.  Namely, $\Omega^2$ sits
in a short exact sequence $0 \rightarrow \Omega^2
\rightarrow P \stackrel{\pi}{\rightarrow} Q$ where $P$ and $Q$ are finitely
generated projective modules.  Consider the right-exact sequence $Q^*
\stackrel{\pi^*}{\rightarrow} P^* \rightarrow M \rightarrow 0$ and
apply $(-)^*$ to get a left-exact sequence $0 \rightarrow M^*
\rightarrow P^{**} \stackrel{\pi^{**}}{\longrightarrow} Q^{**}$.
Since $\pi^{**}$ is isomorphic to $\pi$, we get $\Omega^2 \cong M^*$.
\end{proof}

The following lemma is implicitly in \cite{CPST}, but it is handy to
make it explicit for reference.  Recall from \cite[defs.\ (2.1)]{CPST}
that if $\cB$ is a full subcategory of $\mod\, R$, then a
$\cB$-approximation of an $M$ in $\mod\, R$ is a short exact sequence
$0 \rightarrow K \rightarrow B \rightarrow M
\rightarrow 0$ where $B$ is in $\cB$ and $\Ext_R^{\geqslant 1}(\cB,K) =
0$.

\begin{Lemma}
\label{lem:special_precover_is_approximation}
Consider a special $\cG$-precover and complete it with its kernel.
The resulting short exact sequence $0 \rightarrow K \rightarrow G
\rightarrow M \rightarrow 0$ is a $\cG$-approximation of $M$.
\end{Lemma}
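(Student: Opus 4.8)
The plan is to unwind the definitions and observe that the only genuine content is the identification of the relevant $\Ext$-vanishing conditions. Recall that a special $\cG$-precover $G \xrightarrow{\gamma} M$ is, by definition, a $\cG$-precover with $\Ext_R^1(\cG,\Ker\gamma) = 0$; completing it with its kernel produces the short exact sequence $0 \rightarrow K \rightarrow G \rightarrow M \rightarrow 0$ with $K = \Ker\gamma$ and $G$ in $\cG$. Comparing with the definition of a $\cG$-approximation recalled just above (from \cite[defs.\ (2.1)]{CPST}), what remains to verify is that $\Ext_R^1(\cG,K) = 0$ upgrades to $\Ext_R^{\geqslant 1}(\cG,K) = 0$. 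The display of the short exact sequence is immediate, so essentially all the work is this dimension-shifting step.

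First I would reduce the higher $\Ext$'s to $\Ext^1$ by syzygies. Since $\cG$ is closed under the duality functor (Remark \ref{rmk:G}(iii)) and, more to the point, since every module $G$ in $\cG$ is a first syzygy of another Gorenstein projective module — indeed $G$ fits in a short exact sequence $0 \rightarrow G \rightarrow P \rightarrow G' \rightarrow 0$ with $P$ free (in $\cF$) and $G'$ in $\cG$, this being part of the defining complete resolution for a finitely generated Gorenstein projective module — one obtains for each $i \geqslant 1$ an isomorphism $\Ext_R^{i+1}(G',K) \cong \Ext_R^i(G,K)$ coming from the long exact sequence and the vanishing $\Ext_R^{\geqslant 1}(\cF,K) = 0$ (free modules have no higher $\Ext$ into anything). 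Iterating, $\Ext_R^i(\cG,K)$ is isomorphic to $\Ext_R^1(\cG,K)$ for all $i \geqslant 1$, and the latter vanishes by hypothesis. Hence $\Ext_R^{\geqslant 1}(\cG,K) = 0$, which is exactly the remaining condition.

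The main obstacle — really the only point requiring care — is justifying that every $G$ in $\cG$ occurs as a syzygy of a module in $\cG$ with free cokernel, i.e. invoking the correct form of the complete resolution defining finitely generated Gorenstein projective modules; once that is in hand, the dimension shift is routine. An alternative, if one prefers to avoid syzygies of $\cG$, is to dimension-shift on $K$ instead: write $0 \rightarrow K \rightarrow G \rightarrow M \rightarrow 0$ and feed it back into a special $\cG$-precover of $K$, but this is more cumbersome, so the syzygy argument above is cleaner. I would present the proof in three short sentences: display the short exact sequence, note that $G \in \cG$ is automatic, and carry out the syzygy dimension shift to pass from $\Ext_R^1(\cG,K) = 0$ to $\Ext_R^{\geqslant 1}(\cG,K) = 0$.
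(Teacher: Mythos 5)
Your overall plan is the same as the paper's: display the short exact sequence, note $G \in \cG$, and pass from $\Ext^1_R(\cG,K)=0$ to $\Ext^{\geqslant 1}_R(\cG,K)=0$ by dimension shifting along the complete resolution structure of Gorenstein projectives. That is exactly the intended argument, and your identification of this as the only real content is correct.

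However, the short exact sequence you write down is oriented the wrong way for the iteration. You take the arbitrary $G \in \cG$ to be the \emph{kernel}, $0 \to G \to P \to G' \to 0$, which yields $\Ext^i_R(G,K) \cong \Ext^{i+1}_R(G',K)$ for $i \geqslant 1$; this raises the degree, so iterating it does not by itself reduce $\Ext^i_R(G,K)$ to an $\Ext^1$ of something in $\cG$. As written, the claim ``iterating, $\Ext^i_R(\cG,K)$ is isomorphic to $\Ext^1_R(\cG,K)$'' has a gap: to close the loop one must also observe that every module in $\cG$ arises as an $(i-1)$st cosyzygy of a module in $\cG$, which is exactly the other half of the complete-resolution structure. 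The paper instead takes the arbitrary $G$ to be the \emph{cokernel}, $0 \to G' \to P \to G \to 0$ with $G' \in \cG$ (citing \cite[cor.\ (4.3.5)(a)]{LWC}), which gives the downward shift $\Ext^{i+1}_R(G,K) \cong \Ext^i_R(G',K)$ and a clean induction on $i$. So the fix is simply to flip your sequence; once you do, your proof coincides with the paper's.
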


\begin{proof}
We know $\Ext^1(\cG,K) = 0$.  By \cite[cor.\ (4.3.5)(a)]{LWC} each $G$
in $\cG$ sits in a short exact sequence $0 \rightarrow G^{\prime}
\rightarrow P \rightarrow G \rightarrow 0$ where $P$ is a finitely
generated projective module and $G^{\prime}$ is in $\cG$, and it
follows by an easy induction that $\Ext^{\geqslant 1}(\cG,K) = 0$ as
desired.
\end{proof}

\begin{Remark}
\label{rmk:CPST}
Let us give a brief summary of a part of \cite{CPST}.

Recall from \cite[(1.1)]{CPST} that if $\cB$ is a full subcategory of
$\mod\, R$, then $\langle \cB \rangle$ denotes the closure under
direct summands and extensions.  The class of finitely generated
Gorenstein projective modules $\cG$ is a so-called reflexive
subcategory of $\mod\, R$ by \cite[def.\ (2.6)]{CPST}.  It follows
from \cite[prop.\ (2.10)]{CPST} that $\langle \widehat{R} \otimes_R
\cG \rangle$ is a reflexive subcategory of $\mod\, \widehat{R}$.

Now suppose that there is an $\langle \widehat{R} \otimes_R \cG
\rangle$-cover of $\Omega_{\widehat{R}}^d(k)$.  The cover is an
$\langle \widehat{R} \otimes_R \cG \rangle$-approximation by
\cite[(2.2)(b)]{CPST}.  But when such an approximation exists, the
proof of \cite[thm.\ (3.4)]{CPST} gives that either, $\widehat{R}$ is
Gorenstein, or $\langle \widehat{R} \otimes_R \cG \rangle$ consists of
free $\widehat{R}$-modules.
\end{Remark}

An important input to the proof of the next theorem are the methods
and results developed by Christensen, Piepmeyer, Striuli, and Takahashi
in \cite{CPST}.

\begin{Theorem}
\label{thm:env}
The following three conditions are equivalent.
\begin{enumerate}

  \item  Each module in $\mod\, R$ has a $\cG$-envelope.
         
\smallskip

  \item  Each module in $\mod\, R$ has a special $\cG$-preenvelope.
         
\smallskip

  \item  $R$ is Gorenstein or $\cG = \cF$.

\end{enumerate}
They imply the following condition.
 \begin{enumerate}
\setcounter{enumi}{3}

  \item  Each module in $\mod\, R$ has a $\cG$-preenvelope.
         
\end{enumerate}
Moreover, if $R$ is henselian then {\rm (iv)} implies {\rm (i)}, {\rm
(ii)}, and {\rm (iii)}.
\end{Theorem}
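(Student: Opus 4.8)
The plan is to prove the chain of implications (i) $\Rightarrow$ (ii) $\Rightarrow$ (iii) $\Rightarrow$ (i), then (ii) $\Rightarrow$ (iv) trivially, and finally the henselian implication (iv) $\Rightarrow$ (iii). The implication (i) $\Rightarrow$ (ii) should be formal: a $\cG$-envelope $M \stackrel{\mu}{\rightarrow} G$ is in particular a $\cG$-preenvelope, and one checks it is automatically special. For this I would argue, using the Wakamatsu-type lemma, that an envelope (being a preenvelope that is moreover ``minimal'' in the endomorphism sense) has $\Ext^1(\Coker\mu,\cG)=0$; the standard proof adapts the argument already used at the end of Theorem \ref{thm:cover_gives_envelope}. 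The implication (iii) $\Rightarrow$ (i) splits into two cases: if $R$ is Gorenstein, every finitely generated module has finite Gorenstein projective dimension, and the theory of Gorenstein projective approximations (via \cite{LWC}) furnishes $\cG$-precovers, whose algebraic duals give $\cG$-envelopes by Theorem \ref{thm:cover_gives_envelope}(iii) after using Lemma \ref{lem:syzygies} and Remark \ref{rmk:G}(v) to realize every module as (a summand of) a dual; if instead $\cG=\cF$, then $\cG$-(pre)envelopes are just free preenvelopes, which exist because $M \to M^{**} \to F$ works, or more simply because for finitely generated $M$ one takes a free cover of $M^*$ and dualizes.

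The implication (ii) $\Rightarrow$ (iii) is the substantive one and is where I expect the main obstacle. Here I would dualize: given a special $\cG$-preenvelope of every module, Theorem \ref{thm:envelope_gives_cover}(ii) produces a special $\cG$-precover of every module, and in particular of $\Omega^d(k)$ — or rather, by Lemma \ref{lem:syzygies}, of a module $M$ having $\Omega^d(k)$ as a direct summand of $M^*$, so that $M^*$ (and hence its summand $\Omega^d(k)$) acquires a special $\cG$-precover. By Lemma \ref{lem:special_precover_is_approximation} this special $\cG$-precover yields a $\cG$-approximation of $\Omega^d(k)$. The idea is then to base-change to the completion $\widehat R$: one shows that a $\cG$-approximation over $R$ produces an $\langle \widehat R \otimes_R \cG\rangle$-approximation of $\Omega_{\widehat R}^d(k)$ over $\widehat R$ (using flatness of $R \to \widehat R$, that syzygies commute with completion, and that $\Ext$ vanishing is preserved). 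Then the machinery recalled in Remark \ref{rmk:CPST} — specifically the proof of \cite[thm.\ (3.4)]{CPST} — forces either $\widehat R$ Gorenstein or $\langle \widehat R\otimes_R\cG\rangle$ to consist of free modules. Finally one descends: $\widehat R$ Gorenstein implies $R$ Gorenstein, and $\langle\widehat R\otimes_R\cG\rangle$ free implies (by faithful flatness of completion and the fact that a finitely generated projective module over a local ring is free) that every module in $\cG$ is free, i.e. $\cG=\cF$. The delicate point is the base-change step for approximations and making sure the relevant $\Ext$-vanishing survives completion; this is ``implicitly in \cite{CPST}'' but needs to be assembled carefully.

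For (ii) $\Rightarrow$ (iv) there is nothing to do: a special $\cG$-preenvelope is a $\cG$-preenvelope. For the henselian case (iv) $\Rightarrow$ (iii), the point is that when $R$ is henselian one can bypass the completion: the argument of (ii) $\Rightarrow$ (iii) can be run directly over $R$ itself. Concretely, assume every finitely generated module has a $\cG$-preenvelope; applying Theorem \ref{thm:envelope_gives_cover}(i) gives that every module has a $\cG$-precover. One then wants to upgrade an ordinary $\cG$-precover of $\Omega^d(k)$ (again obtained via Lemma \ref{lem:syzygies} by dualizing a $\cG$-precover of a suitable $M$, using $\cG$ stable under $(-)^*$) to the input needed for the \cite{CPST} dichotomy; here henselianness of $R$ means $R=\widehat R$ in the relevant qualitative sense — more precisely, \cite{CPST} work with $\langle\widehat R\otimes_R\cG\rangle$ precisely to handle the non-henselian case, and when $R$ is henselian this reduces to $\langle\cG\rangle=\cG$ (since $\cG$ is already closed under summands and extensions), so the existence of a $\cG$-precover of $\Omega^d(k)$ over $R$ directly triggers \cite[thm.\ (3.4)]{CPST} and yields ``$R$ Gorenstein or $\cG=\cF$''. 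The main obstacle in this part is checking that the precover, rather than the stronger special precover, already suffices to invoke the \cite{CPST} dichotomy in the henselian setting — i.e. that a $\cG$-precover of $\Omega^d(k)$ is automatically (or can be completed to) a $\cG$-approximation when $\cG$ is reflexive and $R$ is henselian, which is exactly the content extracted in Remark \ref{rmk:CPST} together with \cite[(2.2)(b)]{CPST}.
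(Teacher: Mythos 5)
Your overall blueprint matches the paper's, but two of the implications are off in ways that would not survive being written out.

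For (iii)$\Rightarrow$(i) in the Gorenstein case you propose to produce $\cG$-\emph{precovers} from the approximation theory of \cite{LWC} and then feed them into Theorem~\ref{thm:cover_gives_envelope}(iii). That part of the theorem requires a $\cG$-\emph{cover} as input, not a precover; a precover would only yield a preenvelope via part (i). The paper obtains genuine $\cG$-covers from Auslander's unpublished theorem (cited as \cite[thm.\ 5.5]{EJX}) and then applies Theorem~\ref{thm:cover_gives_envelope}(iii). Your additional appeal to Lemma~\ref{lem:syzygies} and Remark~\ref{rmk:G}(v) ``to realize every module as a summand of a dual'' is also unnecessary here: the duality theorem already takes an arbitrary $M$ together with a cover of $M^*$, so one needs covers of all duals $M^*$, not a realization of $M$ itself as a dual.

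For the henselian implication your route (iv)$\Rightarrow$(iii) is substantially harder than what the paper actually does, and the obstacle you flag is real: the CPST dichotomy, as packaged in Remark~\ref{rmk:CPST}, is phrased over the completion $\widehat{R}$ (it needs $\widehat{R}\otimes_R\cG$, completeness, and the dualizing complex over $\widehat{R}$), and it is not immediate that you may run it directly over a henselian $R$. The paper sidesteps this entirely by proving (iv)$\Rightarrow$(i): a $\cG$-preenvelope of $M$ dualizes to a $\cG$-precover of $M^*$ by Theorem~\ref{thm:envelope_gives_cover}(i); over a henselian ring a $\cG$-precover can be replaced by a $\cG$-cover via \cite[cor.\ 2.5]{Takahashi3}; and then Theorem~\ref{thm:cover_gives_envelope}(iii) returns a $\cG$-envelope of $M$. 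This double-duality argument uses none of the CPST machinery and is the key trick you missed. Also note that your opening phrase ``every module has a $\cG$-precover'' overstates what duality gives: only modules of the form $M^*$ acquire $\cG$-precovers at that stage, a distinction the paper is careful about. The (ii)$\Rightarrow$(iii) argument you sketch is essentially the paper's, modulo the loose claim that one gets a $\cG$-approximation of $\Omega^d(k)$ itself; in fact one only has an approximation of $M^*$, passes to the direct summand $\Omega_{\widehat{R}}^d(k)$ keeping merely a precover, and then upgrades to a cover by Takahashi before invoking the dichotomy.
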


\begin{proof}
(i)$\Rightarrow$(ii).  Holds by Wakamatsu's Lemma, \cite[lem.\
2.1.2]{Xu}.

(ii)$\Rightarrow$(iii).  By Lemma \ref{lem:syzygies} the module
$\Omega_R^d(k)$ is a direct summand in a module of the form $M^*$
where $M$ is in $\mod\, R$.  If (ii) holds then $M$ has a special
$\cG$-preenvelope, and by Theorem \ref{thm:envelope_gives_cover}(ii)
it follows that $M^*$ has a special $\cG$-precover.  Completing with
the kernel gives a short exact sequence $0
\rightarrow K \rightarrow G \rightarrow M^*
\rightarrow 0$ which is a $\cG$-approximation of $M^*$ by Lemma
\ref{lem:special_precover_is_approximation}.

Tensoring the sequence with $\widehat{R}$ gives an $\langle
\widehat{R} \otimes_R \cG \rangle$-approximation of $\widehat{R}
\otimes_R M^*$ by \cite[prop.\ 2.4]{CPST}.  In particular, there is an
$\langle \widehat{R} \otimes_R \cG \rangle$-precover of $\widehat{R}
\otimes_R M^*$, and the same must hold for its direct summand
$\widehat{R} \otimes_R \Omega_R^d(k) \cong \Omega_{\widehat{R}}^d(k)$.
Hence there is an $\langle \widehat{R} \otimes_R \cG \rangle$-cover of
$\Omega_{\widehat{R}}^d(k)$ by \cite[cor.\ 2.5]{Takahashi3}.

But now the results of \cite{CPST} imply that either, $\widehat{R}$ is
Gorenstein, or $\langle \widehat{R} \otimes_R \cG \rangle$ consists of
free $\widehat{R}$-modules; see Remark \ref{rmk:CPST}.  In the former
case, $R$ is Gorenstein by \cite[thm.\ 18.3]{Matsumura}.  In the
latter case, in particular, $\widehat{R} \otimes_R G$ is a free
$\widehat{R}$-module whenever $G$ is in $\cG$.  But then $G$ is a free
$R$-module whence $\cG = \cF$; cf.\ \cite[cor.\ p.\ 53, exer.\ 7.1,
and (3), p.\ 63]{Matsumura}.

(iii)$\Rightarrow$(i).  First, suppose that $R$ is Gorenstein.  Then
each finitely ge\-ne\-ra\-ted $R$-module has a $\cG$-cover by
unpublished work of Auslander; see \cite[thm.\ 5.5]{EJX}.  Existence
of $\cG$-envelopes now follows from Theorem
\ref{thm:cover_gives_envelope}(iii).

Secondly, suppose $\cG = \cF$.  Then each finitely generated
$R$-module has an $\cF$-envelope by \cite[Prop.\ 2.3(3)]{Takahashi2},
which does not need that paper's assumption that the ring is
henselian.

(i)$\Rightarrow$(iv).  Trivial.

Now assume that $R$ is henselian.

(iv)$\Rightarrow$(i).  Suppose that (iv) holds.  Then Theorem
\ref{thm:envelope_gives_cover}(i) implies that each $R$-module of the
form $M^*$ with $M$ in $\mod\, R$ has a $\cG$-precover.  Since $R$ is
henselian, each $M^*$ has a $\cG$-cover by \cite[cor.\
2.5]{Takahashi3}, and so each $M$ has a $\cG$-envelope by Theorem
\ref{thm:cover_gives_envelope}(iii).
\end{proof}

\begin{Remark}
\label{rmk:main}
As a consequence, the following conditions are e\-qui\-va\-lent. 
\begin{enumerate}

  \item  Each module in $\mod\, R$ has a $\cG$-cover.

\smallskip

  \item  Each module in $\mod\, R$ has a special $\cG$-precover.

\smallskip

  \item  Each module in $\mod\, R$ has a $\cG$-envelope.

\smallskip

  \item  Each module in $\mod\, R$ has a special $\cG$-preenvelope.

\smallskip

  \item  $R$ is Gorenstein or $\cG = \cF$.

\end{enumerate}
Namely, (i)$\Rightarrow$(ii) is by Wakamatsu's Lemma, \cite[lem.\
2.1.1]{Xu}.  (ii)$\Rightarrow$(iv) follows from Theorem
\ref{thm:cover_gives_envelope}(ii).  Conditions (iii), (iv), and (v)
are e\-qui\-va\-lent by Theorem \ref{thm:env}.  And (v)$\Rightarrow$(i)
follows from unpublished work by Auslander; see \cite[thm.\
5.5]{EJX}.

Note that the equivalence of (i), (ii), and (v) was first established
in \cite{CPST}, and that our proof depends on that paper.

Now assume that $R$ is henselian.  Combining with a result of
Crawley-Boevey shows that the following conditions are also
equivalent, where $\varinjlim \cG$ denotes the closure of $\cG$ under
direct limits.
\begin{enumerate}

  \item  Each module in $\mod\, R$ has a $\cG$-precover.

\smallskip

  \item  Each module in $\mod\, R$ has a $\cG$-preenvelope.

\smallskip

  \item  $R$ is Gorenstein or $\cG = \cF$.

\smallskip

  \item  $\varinjlim \cG$ is closed under set indexed direct products.

\end{enumerate}
Namely, (i)$\Rightarrow$(iii) holds by \cite[(2.8) and thm.\
(3.4)]{CPST}.  (iii)$\Rightarrow$(i) follows from unpublished work by
Auslander as above; see \cite[thm.\ 5.5]{EJX}.
(ii)$\Leftrightarrow$(iii) is by Theorem \ref{thm:env}.  And
(ii)$\Leftrightarrow$(iv) holds by
\cite[(4.2)]{WCB}.
\end{Remark}

\begin{Theorem}
\label{thm:limG}
If $R$ has a dualizing complex, is henselian, not Gorenstein, and has
$\cG \neq \cF$, then $\varinjlim \cG$ is strictly contained in the
class of Gorenstein flat modules.
\end{Theorem}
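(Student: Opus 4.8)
The plan is to derive the failure of the Govorov--Lazard analogue from the failure of $\cG$-preenvelopes established in Theorem~\ref{thm:env}, by a general closure argument. The starting observation is that $\varinjlim\cG$ always \emph{does} contain the class of Gorenstein flat modules when it is closed under products, or more precisely, there is a known dichotomy relating the two. Concretely, I would invoke the equivalences collected in the second half of Remark~\ref{rmk:main}: since $R$ is henselian, the conditions ``each module in $\mod\,R$ has a $\cG$-preenvelope'', ``$R$ is Gorenstein or $\cG=\cF$'', and ``$\varinjlim\cG$ is closed under set-indexed direct products'' are all equivalent. Under the hypotheses of the present theorem, $R$ is \emph{not} Gorenstein and $\cG\neq\cF$, so by that equivalence $\varinjlim\cG$ is \emph{not} closed under arbitrary direct products.

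Next I would bring in the fact that the class $\GF$ of Gorenstein flat modules \emph{is} closed under products, at least under the standing hypothesis that $R$ has a dualizing complex. This is where the dualizing-complex assumption earns its keep: for such rings the Gorenstein flat modules coincide with a class (e.g.\ the Bass class / Auslander class description, or Holm--J\o rgensen's characterization) that is visibly closed under arbitrary products, and in particular $\prod_i F_i$ is Gorenstein flat whenever each $F_i$ is. I would cite the relevant structural result here rather than reprove it. Combining the two preceding paragraphs: if we had $\varinjlim\cG = \GF$, then $\varinjlim\cG$ would inherit closure under products from $\GF$, contradicting the previous step. Hence $\varinjlim\cG \neq \GF$.

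Finally, to upgrade ``$\neq$'' to ``strictly contained'', I would note the easy inclusion $\varinjlim\cG \subseteq \GF$: every module in $\cG$ is finitely generated Gorenstein projective, hence Gorenstein flat (for a noetherian ring Gorenstein projective modules are Gorenstein flat), and $\GF$ is closed under direct limits, so it contains $\varinjlim\cG$. Together with $\varinjlim\cG \neq \GF$ this yields the strict containment $\varinjlim\cG \subsetneq \GF$, which is the assertion.

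The main obstacle I anticipate is the middle step: pinning down exactly which closure-under-products statement for Gorenstein flat modules is available over a ring with a dualizing complex, and checking that the hypotheses of that external result are met (noetherian, local, dualizing complex --- all in force here). The argument is otherwise a short syllogism once that input and the product-closure dichotomy from Remark~\ref{rmk:main} are in hand; no delicate computation is needed, only careful bookkeeping of which results require henselian-ness (the dichotomy does) versus which require a dualizing complex (the product closure of $\GF$ does).
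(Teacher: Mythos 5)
Your proposal is correct and follows essentially the same route as the paper's own proof: establish $\varinjlim\cG\subseteq$ (Gorenstein flats) via direct-limit closure, invoke closure of Gorenstein flats under set-indexed products (from \cite[thm.\ 5.7]{CFH}, where the dualizing complex is used), and invoke the equivalence in Remark~\ref{rmk:main} (requiring henselian) to see $\varinjlim\cG$ is \emph{not} closed under products, forcing strict containment.
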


\begin{proof}
Each module in $\cG$ is Gorenstein flat, cf.\ \cite[Thm.\
(5.1.11)]{LWC}, and the class of Gorenstein flat modules is
closed under direct limits by \cite{ELR}, so $\varinjlim \cG$ is
contained in the class of Gorenstein flat modules.

The class of Gorenstein flat modules is closed under set indexed
pro\-ducts by \cite[thm.\ 5.7]{CFH}.  On the other hand, by the last
four conditions of Remark \ref{rmk:main}, the assumptions on $R$ imply
that $\varinjlim \cG$ is not closed under set indexed products.
\end{proof}

\begin{Example}
\label{exa:dual_numbers}
It is easy to find rings of the type required by Theorem
\ref{thm:limG}.  For instance, let us show that the $1$-dimensional
ring 
\[
  T
  = \BQ\mbox{\textlbrackdbl} X,Y,Z,W \mbox{\textrbrackdbl} / (X^2,Y^2,Z^2,XY)
\]
satisfies the conditions of the theorem.

First note that since $T$ is complete, it has a dualizing complex and
is henselian.

Next consider $S = \BQ\mbox{\textlbrackdbl} X,Y
\mbox{\textrbrackdbl}/(X^2,Y^2,XY)$ which is not Gorenstein.  The ring
$T$ is $S\mbox{\textlbrackdbl} Z,W
\mbox{\textrbrackdbl}/(Z^2)$; that is, $T$ is the ring of dual numbers
over $S\mbox{\textlbrackdbl} W \mbox{\textrbrackdbl}$.  Since $S$ is
not Gorenstein, neither is $S\mbox{\textlbrackdbl} W
\mbox{\textrbrackdbl}$ or $T$.

Finally, let $z$ be the image of $Z$ in $T$.  Then the complete
projective resolution
\[
  \cdots \rightarrow T \stackrel{z \cdot}{\rightarrow} T \stackrel{z
  \cdot}{\rightarrow} T \rightarrow \cdots
\]
shows that the non-projective module $T/(z)$ is Gorenstein
projective, so $\cG \neq \cF$.
\end{Example}

\begin{Remark}
\label{rmk:BK}
Assume that $R$ is artinian.  Then it has a dualizing complex and is
henselian (in fact, $R$ is complete).  Moreover, it is easy to prove
that each Gorenstein flat module is Gorenstein projective.

If $R$ is not Gorenstein and has $\cG \neq \cF$, then Theorem
\ref{thm:limG} shows that $\varinjlim \cG$ is strictly contained in
the class of Gorenstein projective modules.  Hence \cite[4.2]{BK}
shows that $R$ is not a so-called virtually Gorenstein ring.
\end{Remark}

\section{A special Gorenstein flat module}
\label{sec:special}

This short section shows a method for constructing a Gorenstein flat
module outside $\varinjlim \cG$.

\begin{Construction}
\label{con:product}
Let $\{G_i\}_{i \in I}$ be a set of representatives of the isomorphism
classes of indecomposable modules in $\cG$.  Let $M$ be in $\mod\, R$.
For each $i$ in $I$, view $H(i) = \Hom_R(M,G_i)$ as a set and consider
the direct product $G_i^{H(i)}$ indexed by that set.  Define
\[
  \Lambda(M) = {\textstyle \prod_{i \in I}} G_i^{H(i)}.
\]
\end{Construction}

\begin{Proposition}
\label{pro:product}
Assume that $R$ has a dualizing complex.  Let $M$ be in $\mod\, R$ and
suppose that $M$ does not have a $\cG$-preenvelope.  Then $\Lambda(M)$
is a Gorenstein flat module outside $\varinjlim \cG$.
\end{Proposition}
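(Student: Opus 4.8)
The plan is to show two things separately: first, that $\Lambda(M)$ is Gorenstein flat; second, that it lies outside $\varinjlim \cG$. For the first point, I would observe that each $G_i$ is in $\cG$, hence Gorenstein projective, hence Gorenstein flat by \cite[Thm.\ (5.1.11)]{LWC}. Since $R$ has a dualizing complex, the class of Gorenstein flat modules is closed under arbitrary direct products by \cite[thm.\ 5.7]{CFH}; any power $G_i^{H(i)}$ is such a product, and $\Lambda(M) = \prod_{i \in I} G_i^{H(i)}$ is again a product of Gorenstein flat modules, so $\Lambda(M)$ is Gorenstein flat. This part is essentially immediate from the cited closure properties.

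For the second point, the idea is that if $\Lambda(M)$ were in $\varinjlim \cG$, one could manufacture a $\cG$-preenvelope of $M$, contradicting the hypothesis. First I would note that there is a canonical homomorphism $\lambda \colon M \to \Lambda(M)$, namely the product over all $i \in I$ and all $f \in H(i)$ of the maps $f \colon M \to G_i$; by construction $\lambda$ has the ``universal'' property that every homomorphism $M \to G_i$ (for $G_i$ indecomposable) factors through $\lambda$. Since every module in $\cG$ is a finite direct sum of indecomposables in $\cG$ (Krull-Remak-Schmidt applies because $R$ is complete, or at least henselian, so $\mod R$ is Krull-Schmidt), in fact \emph{every} homomorphism $M \to G$ with $G \in \cG$ factors through $\lambda$. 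So $\lambda$ would already be a $\cG$-preenvelope \emph{if} $\Lambda(M)$ itself were in $\cG$; the problem is only that $\Lambda(M)$ is a huge product, not finitely generated.

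The crux is therefore to descend from $\Lambda(M)$, which is merely in $\varinjlim \cG$ under the contrary assumption, to an actual member of $\cG$ through which $\lambda$ factors. Here the plan is to use that $M$ is finitely generated together with the direct-limit presentation $\Lambda(M) = \varinjlim_\alpha G^{(\alpha)}$ with each $G^{(\alpha)}$ in $\cG$: a homomorphism from a finitely presented module into a direct limit factors through one of the terms, so $\lambda$ factors as $M \to G^{(\alpha)} \to \Lambda(M)$ for some $\alpha$. I would then check that the composite $\mu \colon M \to G^{(\alpha)}$ is a $\cG$-preenvelope: given any $M \to G'$ with $G' \in \cG$, it factors through $\lambda$, hence through the structure map $G^{(\alpha)} \to \Lambda(M)$ composed appropriately — but one must be careful that the factorization goes through $\mu$ and not merely through $\lambda$. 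The clean way is to use the explicit description: the $i$-th, $f$-th component of $\lambda$ is $f$ itself, so any $f \colon M \to G_i$ equals $(\text{projection}) \circ \lambda = (\text{projection}) \circ (G^{(\alpha)} \to \Lambda(M)) \circ \mu$, exhibiting the desired factorization through $\mu$. Assembling these for the finitely many indecomposable summands of an arbitrary $G' \in \cG$ gives that $\mu$ is a $\cG$-preenvelope, contradicting the hypothesis on $M$. Hence $\Lambda(M) \notin \varinjlim \cG$.

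The main obstacle I anticipate is the bookkeeping in the last step: making sure the factorization of an arbitrary $M \to G'$ genuinely passes through the single map $\mu \colon M \to G^{(\alpha)}$, uniformly, rather than through different finite stages for different targets. The resolution is that $\lambda$ is a \emph{fixed} map and \emph{every} $M \to G'$ factors through it functorially via the component-projection description; once $\lambda$ is pushed down to a single $\mu$ through one direct-limit stage, that same $\mu$ works for all $G'$ simultaneously, because the factorization data for $G'$ was recorded in $\Lambda(M)$ before the descent and is recovered by post-composing with the structure map $G^{(\alpha)} \to \Lambda(M)$.
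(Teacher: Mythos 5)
Your argument is correct and is essentially the same as the paper's: show $\Lambda(M)$ is Gorenstein flat via closure of that class under products, build the canonical map $\lambda\colon M\to\Lambda(M)$ through which every $M\to G$ with $G\in\cG$ factors, and then use the Lenzing/finitely-presented-into-a-direct-limit fact to descend $\lambda$ to a single $\widetilde G\in\cG$, producing a $\cG$-preenvelope and hence a contradiction. One small correction: you justify ``every $G\in\cG$ is a finite direct sum of indecomposables in $\cG$'' by appealing to Krull--Remak--Schmidt over a henselian or complete ring, but the proposition's hypotheses do not include henselian; fortunately no such assumption is needed, since over any noetherian ring every finitely generated module decomposes into finitely many indecomposable summands, and $\cG$ is closed under direct summands, so each summand lies in $\cG$ and is isomorphic to one of the chosen representatives $G_i$ --- uniqueness of the decomposition (which is what KRS would add) is never used.
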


\begin{proof}
As in the proof of Theorem \ref{thm:limG}, the modules in $\cG$
are Gorenstein flat and the class of Gorenstein flat modules is closed
under set indexed products, so $\Lambda(M)$ is Gorenstein flat.

For each $i$ in $I$, consider the homomorphism
\[
    M \stackrel{\mu_i}{\rightarrow} G_i^{H(i)}, \;\;\; m \mapsto (h(m))_{h \in H(i)}. 
\]
Let $\Lambda(M) \stackrel{\pi_i}{\rightarrow} G_i^{H(i)}$ be the
$i$'th projection, and let $M \stackrel{\mu}{\rightarrow} \Lambda(M)$
be the unique homomorphism which satisfies $\pi_i\mu = \mu_i$ for each
$i$ in $I$.  Then each homomorphism $M \stackrel{\eta}{\rightarrow} G$
with $G$ in $\cG$ factors through $\mu$,
\[
  \xymatrix{ 
    M \ar[d]_-{\eta} \ar[r]^-{\mu} & \Lambda(M). \ar@{-->}^{\lambda}[dl] \\ 
    G & {}
           }
\]
Namely, we may assume $G = G_i$ for some $i$, since each $G$ in $\cG$
is isomorphic to a finite direct sum of modules from the set
$\{G_i\}_{i \in I}$.  But then $\eta$ is an element of $H(i)$,
and we can let $\lambda$ equal the composition of the projections
$\Lambda(M) \stackrel{\pi_i}{\rightarrow} G_i^{H(i)} \rightarrow G_i$
where the second one is onto the $\eta$th copy of $G_i$.

Now, $M$ is finitely presented, so if $\Lambda(M)$ were in
$\varinjlim\cG$ then \cite[prop.\ 2.1]{Lenzing} would give that $\mu$
could be factored as $M \stackrel{\widetilde{\mu}}{\rightarrow}
\widetilde{G} \rightarrow \Lambda(M)$ with $\widetilde{G}$ in $\cG$.
Since each homomorphism $M \stackrel{\eta}{\rightarrow} G$ factors
through $\mu$ by the above, it would also factor through
$\widetilde{\mu}$ which would hence be a $\cG$-preenvelope of $M$.
Since there is no such $\cG$-preenvelope, $\Lambda(M)$ is outside
$\varinjlim \cG$.
\end{proof}

\medskip
\noindent
{\em Acknowledgement. }  We thank Luchezar Avramov, Apostolos
Beligiannis, and Lars Win\-ther Christensen for comments to previous
versions of the paper.  In particular, LA pointed out reference
\cite{Govorov}, and AB informed us that \cite{BK} had already found
Artin algebras without a Gorenstein analogue of the Govorov-Lazard
Theorem.

\end{document}